\newfont{\bb}{msbm10}
\newtheorem{theorem}{Theorem}[section]
\newtheorem{lemma}{Lemma}[section]
  \numberwithin{equation}{section}
 \title{Restarted randomized surrounding methods for solving large linear equations}
 \author{
	Jun-Feng Yin\\
	School of Mathematical Sciences, Tongji University,\\
	Shanghai, 200092, PR China.\\
	Email:yinjf@tongji.edu.cn\\
    Nan Li\\
	School of Mathematical Sciences, Tongji University,\\
	Shanghai, 200092, PR China.\\
	Email:sunshinekiwili@tongji.edu.cn\\
	and\\
	Ning Zheng\\
	School of Mathematical Sciences, Tongji University,\\
	Shanghai, 200092, PR China.\\
	Email: nzheng@tongji.edu.cn\\
}
\begin{document}
\cleardoublepage \pagestyle{myheadings}



\markboth{\small J.-F. Yin, N. Li and N. Zheng  }
{\small Restarted randomized surrounding methods for solving large linear equations }

 \captionsetup[figure]{labelfont={bf},labelformat={default},labelsep=period,name={Fig.}}	
  \captionsetup[table]{labelfont={bf},labelformat={default},labelsep=period,name={Table }}		\maketitle
  \date{ }
\begin{abstract}
A class of restarted randomized surrounding methods are presented to accelerate the surrounding algorithms by restarted techniques for solving the linear equations.
Theoretical analysis shows that the proposed method converges under the randomized row selection rule and the convergence rate in expectation is also addressed.
Numerical experiments further demonstrate that the proposed algorithms are efficient and outperform the existing method for overdetermined and underdetermined linear equations, as well as in the application of image processing.
\end{abstract}

\noindent{\bf Keywords.}\ Reflection transformation, Randomized iterative methods, Linear equations, Convergence
%
%

\section{Introduction}
	Consider the solution of linear algebraic equations
	\begin{equation}\label{eqn:axb}
Ax=b,\quad A\in\mathbb{R}^{m\times n},\quad b\in\mathbb{R}^{m},
	\end{equation}
where $A$ has full column rank, which comes widely from many scientific and engineering computation, for instance, discrete PDEs, image reconstruction, signal processing, option pricing and machine learning.

Kaczmarz method is one of the well-known iterative projection method,
which was firstly proposed in \cite{1937K} and further extended to block and inconsistent cases in \cite{1980E,1981C}.
Since the linear convergence of a randomized Kaczmarz method was established  by Strohmer and Vershynin \cite{2009SV},  variants of randomized Kaczmarz method were presented and deeply studied, see \cite{2018BWgreedy,2019BW,2022JZY,2021Du}. On the other hand, iterative methods based on Householder orthogonal reflection also attract much attention from the community of numerical linear algebra.
Cimmino \cite{1938cimmino} firstly proposed a general iteration scheme with orthogonal reflection and proved the convergence as long as the
rank of $A$ is greater than one. Ansorge \cite{1984Ansorge} gave the relations between the Cimmino method and Kaczmarz method for the solution of singular and rectangular systems of equations and proved the rate of convergence for a given weight.
Further, block Cimmino method\cite{1995ADRS,2018TMA} and extended Cimmino method \cite{2008PPS} were proposed and investigated.
For more details of the Cimmino method, we refer the reader to \cite{2004Benzi}.

Recently, Steinerberger\cite{2021Steinerberger} studied a surrounding method
which randomly reflected the start point and took the average of all reflective points
as the approximate solution.
In this manuscript, we propose a restarted randomized surrounding method, which takes the average of several randomly reflective points as the new initial value and repeats the iterations.
Theoretical analysis demonstrates the convergence in expectation
and shows the convergence rate is faster than that of the existing surrounding method.
Numerical experiments further verify our analysis, and show that restarted strategies are efficient which can greatly accelerate the surrounding method.

The organization of the rest paper is as follows. In Section 2, we give some notations and propose the restarted randomized surrounding algorithms. In Section 3, the convergence analysis is given and compared with the existing results. Numerical experiments are presented in Section 4 compared with the surrounding method. Finally, in Section 5, we end this paper with the conclusions.

\section{The restarted randomized surrounding method}

In this section, after introducing  the notations and reviewing the existing method, we introduce the restarted randomized surrounding method for solving the linear equations \eqref{eqn:axb}.

Denote $a_i^T=[a_{i1},a_{i2},\ldots,a_{in}]$ and $b_i$  be the $i$th row of $A$ and the $i$th entry of the right-hand side vector $b$ respectively. Let $x_k$ be the $k$th approximate solution and $x_\ast$ be the exact solution of the linear equations \eqref{eqn:axb} respectively, which is actually the intersection of the $n$ hyperplanes
$a_i^Tx=b_i  (1\leq i \leq m)$.

By choosing $i_k$ from the set $\left\{1,2,\cdots, m \right\} $ with probability proportional to $ \| a_i\|_2^2$, Steinerberger \cite{2021Steinerberger} proposed a surrounding method as follows.
\begin{eqnarray}
x_{k+1}  &=& x_k+2\frac{b_{i_k}-a_{i_k}^Tx_k}{\|a_{i_k}\|_2^2}a_{i_k}, \quad 1\leq k \leq M, \\
   &=& (I - 2\frac{a_{i_k}^Ta_{i_k}}{\|a_{i_k}\|_2^2})x_k+2\frac{b_{i_k}}{\|a_{i_k}\|_2^2}a_{i_k}, \quad 1\leq k \leq M.
\end{eqnarray}
After $M$ reflections,
the approximate solution is given by the average of all the reflective points $ \frac{1}{M} \sum_{k=1}^M x_k$.

Steinerberger \cite{2021Steinerberger} proved that the approximate solution approaches to  the true solution $x_\ast$ when $M$ goes to infinity, e.g.,
$\lim\limits_{M \rightarrow \infty} \frac{1}{M}\sum_{i=1}^{M} x_k= x_\ast$ and the convergence rate in expectation was given by \begin{equation}
\mathbb{E}\left\|x_\ast-\frac{1}{M} \sum_{k=1}^{M} x_{k}\right\| \leq \frac{1+\|A\|_{F}\left\|A^{-1}\right\|}{\sqrt{M}} \left\|x_\ast-x_{0}\right\|.
\end{equation}

It is easily seen that it requires quite a number of reflective points to achieve a satisfying convergence precision, which is usually very expensive.

In order to accelerate the convergence of the randomized surrounding method,
 a restarted version is proposed which firstly reflects several times and  then  takes the average of the reflective points as the initial to restart the iterations.

The framework of the restarted randomized surrounding method (abbreviated as RRS) can be described as follow:

\begin{algorithm} [H]
	\caption{Restarted randomized surrounding (RRS) algorithms}
	\label{alg2}
	\begin{algorithmic}[1]
		\Require  $A$, $b$, initial guess $x_{0}=y_{0}^{(0)}\in\mathbb{R}^{n}$, restarted number $q$.
		\Ensure approximate solution $x_{k+1}$.
		\For {$k=0,1,2,\ldots$ }
		\For{ $i = 1,2,\ldots,q-1$}
		\State Select a row index $i_k$ randomly according to $p_{i_k}=\frac{\|a_{i_k}\|^2}{\|A\|_F^2}$
		\State Set $y_k^{(i)}=y_k^{(i-1)}+2\frac{b_{i_k}-a^T_{i_k}y_{k}^{(i-1)}}{\|a_{i_k}\|_2^2}a_{i_k}$
		\EndFor
		\State Compute the approximate solution:  $x_{k+1}=\frac{1}{q}\sum_{i=0}^{q-1}y_{k}^{(i)}$
		\State Update $y_{k+1}^{(0)}=x_{k+1}$
		\EndFor\\
		\Return $x_{k+1}$
	\end{algorithmic}
\end{algorithm}

We plot a sketch graph to demonstrate the idea of restarted randomized surrounding method  in Figure \ref{fig:sketch2},
where the green diamond points are the approximate solutions which eventually converge to the center, namely the true solution.
In Figure \ref{fig:sketch2}, the star points and circle points denote the randomized reflective points generated in the first and second loops respectively.

\begin{figure}[!htbp]
	\centering
	\includegraphics[width=0.7\linewidth]{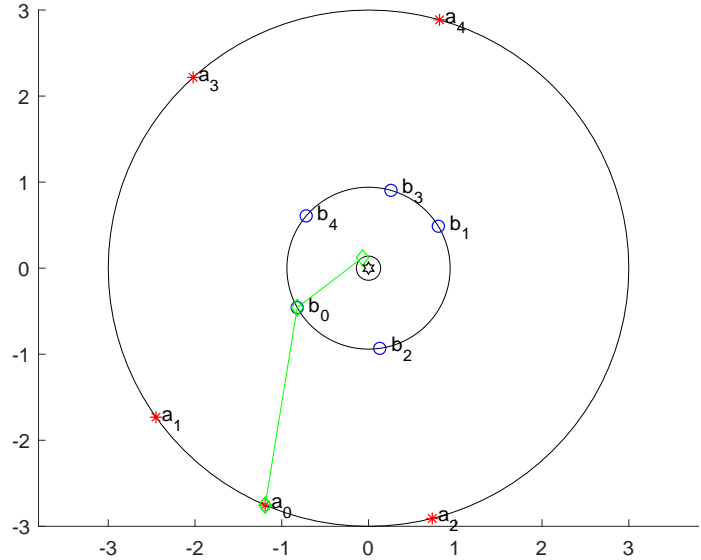}
	\caption{Sketch of restarted randomized surrounding method}
	\label{fig:sketch2}
\end{figure}

\section{Convergence analysis}
Denote the Householder reflection matrix corresponding to the $i$th hyperplane by
\begin{equation}
H_i=I-2\frac{a_{i}a_{i}^T}{a_{i}^Ta_{i}},\quad i=1,2,\ldots,m.
\end{equation}
which is an $n\times n$ orthogonal matrix.
We give the convergence of restarted randomized surrounding method as follows.

\begin{lemma}\label{mythm1}
In every iteration, if $q$ reflections are randomly computed where $i_k$ is taken  from the set $\left\{1,2,\cdots, m \right\} $ with the probability $p_{i_k}=\frac{\|a_{i_k}\|^2}{\|A\|_F^2}$, then the sequence $\{\|x_k-x_\ast\|\}$ is non-increasing.
\end{lemma}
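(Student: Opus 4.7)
The plan is to exploit the fact that each step inside a single restart cycle is a reflection across an affine hyperplane that contains the exact solution $x_\ast$, hence it is a distance-preserving (isometric) map about $x_\ast$. Writing $x_{k+1}-x_\ast$ as an average of vectors of equal length and invoking the triangle inequality then yields the non-increase property.

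More concretely, first I would record the identity
\[
y+2\,\frac{b_i-a_i^Ty}{\|a_i\|_2^2}a_i - x_\ast
= \Bigl(I-2\,\frac{a_ia_i^T}{\|a_i\|_2^2}\Bigr)(y-x_\ast),
\]
which is verified by using $a_i^Tx_\ast=b_i$. Since $H_i=I-2\,a_ia_i^T/\|a_i\|_2^2$ is orthogonal, this gives $\|y+2(b_i-a_i^Ty)/\|a_i\|_2^2\,a_i - x_\ast\| = \|y-x_\ast\|$ for every row index $i$.

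Next, I would apply this identity along the inner loop of the algorithm: starting from $y_k^{(0)}=x_k$, each inner step is a reflection across a hyperplane through $x_\ast$, so
\[
\|y_k^{(i)}-x_\ast\| \;=\; \|y_k^{(i-1)}-x_\ast\| \;=\;\cdots\;=\; \|y_k^{(0)}-x_\ast\| \;=\; \|x_k-x_\ast\|
\]
for every $i=0,1,\ldots,q-1$, regardless of which row indices $i_k$ were drawn.

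Finally, using the update $x_{k+1}=\frac{1}{q}\sum_{i=0}^{q-1}y_k^{(i)}$ and subtracting $x_\ast=\frac{1}{q}\sum_{i=0}^{q-1}x_\ast$, the triangle inequality gives
\[
\|x_{k+1}-x_\ast\| \;=\; \Bigl\|\tfrac{1}{q}\sum_{i=0}^{q-1}(y_k^{(i)}-x_\ast)\Bigr\| \;\le\; \tfrac{1}{q}\sum_{i=0}^{q-1}\|y_k^{(i)}-x_\ast\| \;=\; \|x_k-x_\ast\|,
\]
which is the desired monotonicity. The argument is deterministic once the indices $i_k$ are drawn, so no probabilistic machinery is actually needed for this lemma; the randomness and the row-probability weighting $p_{i_k}=\|a_{i_k}\|_2^2/\|A\|_F^2$ only become essential in the subsequent expected convergence rate result. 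I do not expect any real obstacle here, apart from the bookkeeping of the affine-to-linear reduction $a_i^Tx_\ast=b_i$ which makes the reflections into isometries about $x_\ast$.
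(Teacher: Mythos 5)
Your proposal is correct and follows essentially the same route as the paper: both reduce each affine reflection to the orthogonal map $H_i$ acting on $y-x_\ast$ (via $a_i^Tx_\ast=b_i$), observe that all the errors $y_k^{(i)}-x_\ast$ therefore have the same norm as $x_k-x_\ast$, and conclude by the triangle inequality applied to the average. The only cosmetic difference is that the paper packages the composed reflections as orthogonal matrices $Q_j=H_{i_j}\cdots H_{i_1}$ and additionally remarks on when equality holds, which your argument omits but the lemma statement does not require.
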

\begin{proof}
	Without loss of generality, we assume that the rows with indices $\{i_{1},\ldots,i_{q}\}$ are selected in $k$-th iteration step and set
	\begin{equation}
	Q_{j}=H_{i_j}H_{i_{j-1}}\cdots H_{i_1},\,Q_0=I,\,j=1,\ldots,q.
	\end{equation}
	where $I$ represents the identity matrix of size $n$. Since $Q_{j}$ is a product of the Householder matrices, $Q_{j}$ is orthogonal and nonsingular.
	Hence,
	\begin{equation}
	\begin{aligned}
	x_{k+1}-x_\ast&=\frac{1}{q}\left(\sum_{i=1}^{q}y_k^{(i)}\right)-x_\ast\\
&=\frac{1}{q}\sum_{i=1}^{q}(y_k^{(i)}-x_\ast)\\
	&=\frac{1}{q}(Q_1+Q_2+\cdots+Q_q)( x_{k}-x_\ast). 
	\end{aligned}
	\end{equation}
	Due to distance-preserving transformation,
	\begin{equation}\label{equalcondition}
	\| x_{k+1}-x_\ast\|\leq \frac{1}{q} \sum_{j=1}^{q}\|Q_{j}(x_k-x_\ast)\|= \frac{1}{q}\sum_{i=1}^{q}\|(x_k-x_\ast)\|=\|(x_k-x_\ast)\|.
	\end{equation}

	Because the linear combination is convex, the equality in \eqref{equalcondition}  holds if and only if
$$x_{k}-x_\ast=Q_1 (x_{k}-x_\ast)=\cdots=Q_q(x_{k}-x_\ast).$$
 If $x_k\not=x_\ast$ and $\mathcal{N}(A)=\{0\}$, then $\|\frac{1}{q}(Q_1+Q_2+\cdots+Q_q)\|<1$, which yields the sequence $\{\|x_k-x_\ast\|\}$ is decreasing.
\end{proof}

From the Lemma \ref{mythm1}, it is seen that the restarted randomized  surrounding  algorithm is convergent.
Further, we estimate and analyze the convergence rate of the restarted randomized surrounding algorithm as follow.

\begin{theorem}\label{th:con}
If $i_k$ is taken with the probability $p_{i_k}=\frac{\|a_{i_k}\|^2}{\|A\|_F^2}$
and  $q$ reflections are  randomly taken in every iteration, it holds
\begin{equation}
\mathbb{E}\|x_\ast-x_{k}\|^2\leq \gamma^{k} \|x_0-x_\ast\|^2, \quad 0<\gamma<1,
\end{equation}
where the constant $\gamma<\frac{1}{q}+\frac{2}{q^2}\sum_{i=1}^{q-1}(q-i)\|\mathcal{L}\|^i$ and $\|\mathcal{L}\|=1-\frac{2\sigma_{\min}^2}{\|A\|_F^2}$.
\end{theorem}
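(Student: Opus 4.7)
The plan is to square the identity derived in the proof of Lemma \ref{mythm1},
$$x_{k+1}-x_\ast = \frac{1}{q}\sum_{j=1}^{q} Q_j(x_k-x_\ast),\qquad Q_j := H_{i_j}H_{i_{j-1}}\cdots H_{i_1},$$
to obtain
$$\|x_{k+1}-x_\ast\|^2 = \frac{1}{q^2}\sum_{i,j=1}^{q}(x_k-x_\ast)^T Q_i^T Q_j(x_k-x_\ast),$$
and then pass to the conditional expectation given $x_k$, term by term. The diagonal contribution ($i=j$) collapses to exactly $q\,\|x_k-x_\ast\|^2$ because each $Q_i$ is orthogonal; all the randomness is concentrated in the $q(q-1)$ off-diagonal cross terms.

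For a pair $i<j$, the key observation is the telescoping factorization $Q_j=(H_{i_j}\cdots H_{i_{i+1}})\,Q_i$, which lets the cross term be rewritten as $u^T H_{i_j}\cdots H_{i_{i+1}} u$, where $u:=Q_i(x_k-x_\ast)$ satisfies $\|u\|=\|x_k-x_\ast\|$. Because the indices $i_1,\ldots,i_q$ are drawn independently from the same row-norm distribution, $u$ (which depends only on $i_1,\ldots,i_i$) is independent of the middle product (which depends only on $i_{i+1},\ldots,i_j$). Conditioning first on $i_1,\ldots,i_i$ and then averaging over the middle indices, the product of $j-i$ independent expectations collapses to $\mathcal{L}^{j-i}$, where
$$\mathcal{L}:=\mathbb{E}[H_{i_k}]=\sum_{\ell=1}^{m}\frac{\|a_\ell\|^2}{\|A\|_F^2}\Big(I-\frac{2a_\ell a_\ell^T}{\|a_\ell\|^2}\Big)=I-\frac{2A^T A}{\|A\|_F^2}.$$
Symmetry of $\mathcal{L}$ and $\|u\|=\|x_k-x_\ast\|$ then deliver the pointwise bound $u^T\mathcal{L}^{j-i}u\le\|\mathcal{L}\|^{j-i}\|x_k-x_\ast\|^2$, while $\|\mathcal{L}\|=1-\tfrac{2\sigma_{\min}^2}{\|A\|_F^2}$ is a direct eigenvalue calculation using $\sigma_i(A)^2\in[\sigma_{\min}^2,\sigma_{\max}^2]\subset[0,\|A\|_F^2]$.

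Assembling the pieces and grouping off-diagonal pairs by the gap $d=j-i\in\{1,\ldots,q-1\}$ (there are $q-d$ such pairs, with a factor $2$ for the symmetric pair $i>j$) yields
$$\mathbb{E}\bigl[\|x_{k+1}-x_\ast\|^2\mid x_k\bigr]\le\Big(\tfrac{1}{q}+\tfrac{2}{q^2}\sum_{d=1}^{q-1}(q-d)\|\mathcal{L}\|^d\Big)\|x_k-x_\ast\|^2 =:\gamma\,\|x_k-x_\ast\|^2,$$
and the tower property then iterates this into $\mathbb{E}\|x_\ast-x_k\|^2\le\gamma^k\|x_0-x_\ast\|^2$. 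The bound $\gamma<1$ is a sanity check: plugging $\|\mathcal{L}\|=1$ into the parenthesized expression collapses the geometric count to $\tfrac{1}{q}+\tfrac{2}{q^2}\cdot\tfrac{q(q-1)}{2}=1$, so as long as $\|\mathcal{L}\|<1$ (guaranteed by $\sigma_{\min}>0$ since $A$ has full column rank) we have strict contraction. The main obstacle is precisely the decoupling step, i.e.\ noticing the factorization $Q_j=(H_{i_j}\cdots H_{i_{i+1}})\,Q_i$ and the resulting conditional independence that reduces the expectation of a product of random reflections to a power of $\mathcal{L}$; once that is in place, the remainder is bookkeeping and standard spectral estimates.
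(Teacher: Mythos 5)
Your proposal is correct and follows essentially the same route as the paper: the same diagonal/off-diagonal split of $\frac{1}{q^2}\|\sum_j Q_j e_k\|^2$, the same reduction of each cross term via the factorization $Q_j=(H_{i_j}\cdots H_{i_{i+1}})Q_i$ and independence of the row indices to $u^T\mathcal{L}^{j-i}u$ with $\mathcal{L}=I-\tfrac{2}{\|A\|_F^2}A^TA$, and the same grouping by gap to obtain $\gamma$ and the strict bound $\gamma<1$. No substantive differences to report.
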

\begin{proof}	
Denote $\langle \cdot,\cdot\rangle$ be the Euclidean inner product,
for all $ x,y\in\mathbb{R}^{n}$, $i_j\in\{1,2,\ldots,m\}$,
we have
	\begin{equation}
	\begin{aligned}
	&\mathbb{E}_{i_j}\left(\langle y, Q_{j}x\rangle\right)
	=\mathbb{E}_{i_j}\left(\langle y,(I-2\frac{a_{i_j}a_{i_j}^T}{\|a_{i_j}\|^2})Q_{j-1}x\rangle\right)\\
	=&\langle y,Q_{j-1}x\rangle-2\mathbb{E}_{i_j}\left(\langle y,\frac{a_{i_j}a_{i_j}^T}{\|a_{i_j}\|^2}Q_{j-1}x\rangle\right)\\
	=&\langle y,Q_{j-1}x\rangle-2\langle y,\sum_{j=1}^{m}\frac{\|a_{i_j}\|^2}{\|A\|_F^2}\left(\frac{a_{i_j}a_{i_j}^T}{\|a_{i_j}\|^2}Q_{j-1}x\right)\rangle\\
	=&\langle y,Q_{j-1}x\rangle-\frac{2}{\|A\|_F^2} \sum_{j=1}^{m}y^Ta_{i_j}a_{i_j}^T
Q_{j-1} x\\
	=&\langle (I-\frac{2}{\|A\|_F^2}A^TA)y,Q_{j-1}x\rangle:=\langle \mathcal{L}y,Q_{j-1}x\rangle.
	\end{aligned}
	\end{equation}	
	
	Similarly, it is obtained that
	\begin{equation}
	\mathbb{E}_{i_1,i_2,\ldots,i_{j}}\langle y,Q_j x\rangle=\mathbb{E}_{i_1,i_2,\ldots,i_{j-1}}\langle \mathcal{L}y,Q_{j-1}x\rangle=\mathbb{E}_{i_1,i_2,\ldots,i_{j-2}}\langle \mathcal{L}^2y,Q_{j-2}x\rangle=\cdots=\langle \mathcal{L}^jy,x\rangle.
	\end{equation}
	Since $\mathcal{L}= I - \frac{2}{\|A\|_F^2}A^TA $ is a symmetric matrix,
	\begin{equation}
	\begin{aligned}
	\rho(\mathcal{L})=\|\mathcal{L}\| \leq 1-\frac{2\sigma_{\min}^2}{\|A\|_F^2},
	\end{aligned}
	\end{equation}
	where $\sigma_{\min}$ is the smallest singular value of $A$. Thus,
	$$
	|\mathbb{E}_{i_1,i_2,\ldots,i_{j}}\langle x,Q_j x\rangle|\leq \|\mathcal{L}^jx\|\|x\|\leq \|\mathcal{L}\|^j\|x\|^2.$$

	Let $e_k=x_k-x_\ast$ and it holds that
	\begin{equation}
	\begin{aligned}
	&\mathbb{E}\|x_\ast-x_{k+1}\|=\mathbb{E}\|x_\ast-\frac{1}{q}\sum_{j=0}^{q-1}Q_{j}x_k\|^2=\frac{1}{q^2}\mathbb{E}\|\sum_{j=0}^{q-1} Q_{j}e_k\|^2\\
	&=\frac{1}{q^2}\left[\sum_{j=0}^{q-1}\langle Q_{j} e_k,Q_{j} e_k\rangle+2\mathbb{E}\sum_{j=0}^{q-2}\sum_{\ell=j+1}^{q-1}\langle Q_{j}e_k,Q_{\ell}e_{k}\rangle\right]\\
	&=\frac{1}{q}\|e_k\|^2+\frac{2}{q^2}\mathbb{E}\sum_{j=0}^{q-2}\sum_{\ell=j+1}^{q-1}\langle Q_{j}e_k, H_{i_\ell}\cdots H_{i_{j+1}}(Q_{j}e_k)\rangle\\
	&\leq \frac{1}{q}\|e_k\|^2+\frac{2}{q^2}\sum_{j=0}^{q-2}\sum_{\ell=j+1}^{q-1}\|\mathcal{L}^{\ell-j}\|\|Q_{j}e_k\|^2:=\gamma\|e_k\|^2,
	\end{aligned}
	\end{equation}where $\gamma$ is a constant and $\|\mathcal{L}\|<1$. By calculating, $$\begin{aligned}
	\gamma&=\frac{1}{q}+\frac{2}{q^2}\left[(q-1)\|\mathcal{L}\|+(q-2)\|\mathcal{L}^2\|+\cdots+\|\mathcal{L}^{q-1}\|\right]\\
	&\leq\frac{1}{q}+\frac{2}{q^2}\left[ (q-1)\|\mathcal{L}\|+(q-2)\|\mathcal{L}\|^2+\cdots+\|\mathcal{L}\|^{q-1}\right]<1.\\
	\end{aligned}$$

Hence, it holds that
	$$\mathbb{E}\|x_\ast-x_{k+1}\|^2\leq \gamma\|x_{k}-x_\ast\|^2\Rightarrow\mathbb{E}\|x_\ast-x_{k}\|^2\leq \gamma^k \|x_0-x_\ast\|^2.$$
\end{proof}

As a consequence of Theorem \ref{th:con}, when the restarts are carried out every $q$ reflection for $k$ times, i.e., total $kq$ iterations,
then  the convergence rate of restarted randomized surrounding method
is $\gamma^k=O(\frac{1}{q^k})$, which is much less that $O(\frac{1}{kq})$, the rate of the randomized surrounding  method if $k>2$ and $ q >2$.
It shows the potential that the restarted version
could be faster than the original one.

The restarted randomized surrounding method can be generalized into
more efficient approaches by introducing the relaxation.
For instance, the restarted randomized surrounding method can still converge when
the approximate solution
is chosen to be the convex linear combination $x_{k+1}=\sum_{i=0}^{q-1} \omega_{k}^{(i)}y_{k}^{(i)}$, where weights $\sum_{i=0}^{q-1}\omega_{k}^{(i)}=1$ and $\omega_{k}^{(i)}>0$.
Moreover, the restart number $q$ in the iteration
could be flexible to get  better numerical performances.

\section{Numerical experiments}

In this section, numerical experiments are presented to demonstrate the efficiency
of the restarted randomized surrounding method  and compared with the original randomized surrounding method.

All the methods start from the zero vector and stop when the norm of relative error vector (denoted by `ERR') satisfies
$$\text{ERR}=\frac{\|x_k-x_\ast\|_2^2}{\|x_0-x_\ast\|_2^2}\leq 10^{-6},$$
or achieves the maximal number of the iteration, e.g., 5000.
 The number of iteration steps (denoted by `IT'), the elapsed CPU time in seconds (denoted by `CPU')
of the randomized surrounding method (abbreviated as `RS') and the proposed restarted randomized surrounding method are compared.

{\bf Example 1.}
The test matrices are generated by using the Matlab function $A=\text{randn}(m,n)$
where the components are normally distributed random numbers.
The consistent linear system is constructed by $b=Ax_\ast$ where the exact solution $x_\ast$ is an all-one vector.

The curves of the relative error versus the number of the iteration are plotted in Figure \ref{fig:ex1} for the RS, RRS(5), RRS(10) and RRS(20) methods respectively.

\begin{figure}[htbp]
	\centering
	\subfigure[$A\in\mathbb{R}^{3000\times 100}$]{
	\centering
		\includegraphics[width=0.48\linewidth]{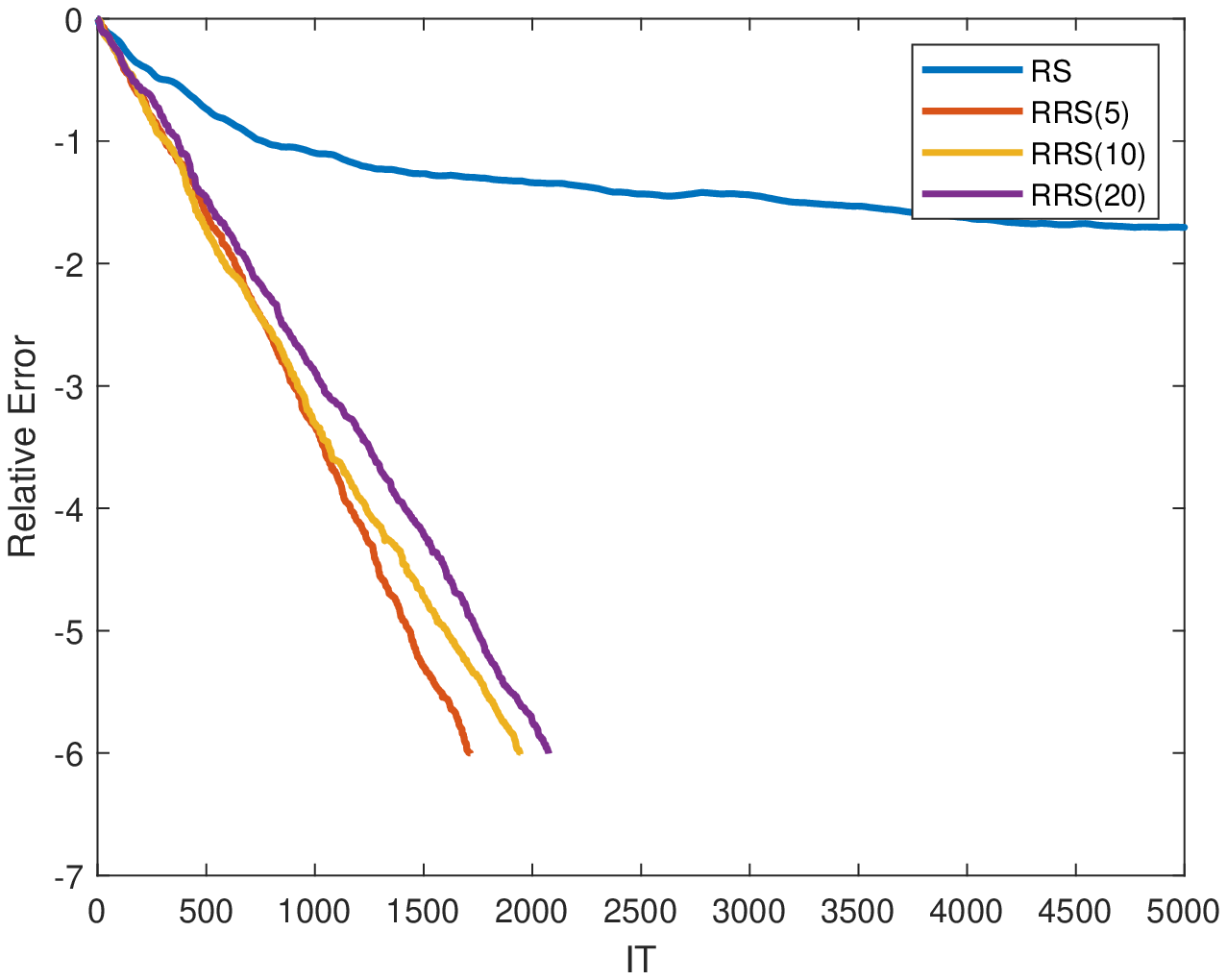}
	}
\subfigure[$A\in\mathbb{R}^{100\times 3000}$]{
		\centering
		\includegraphics[width=0.48\textwidth]{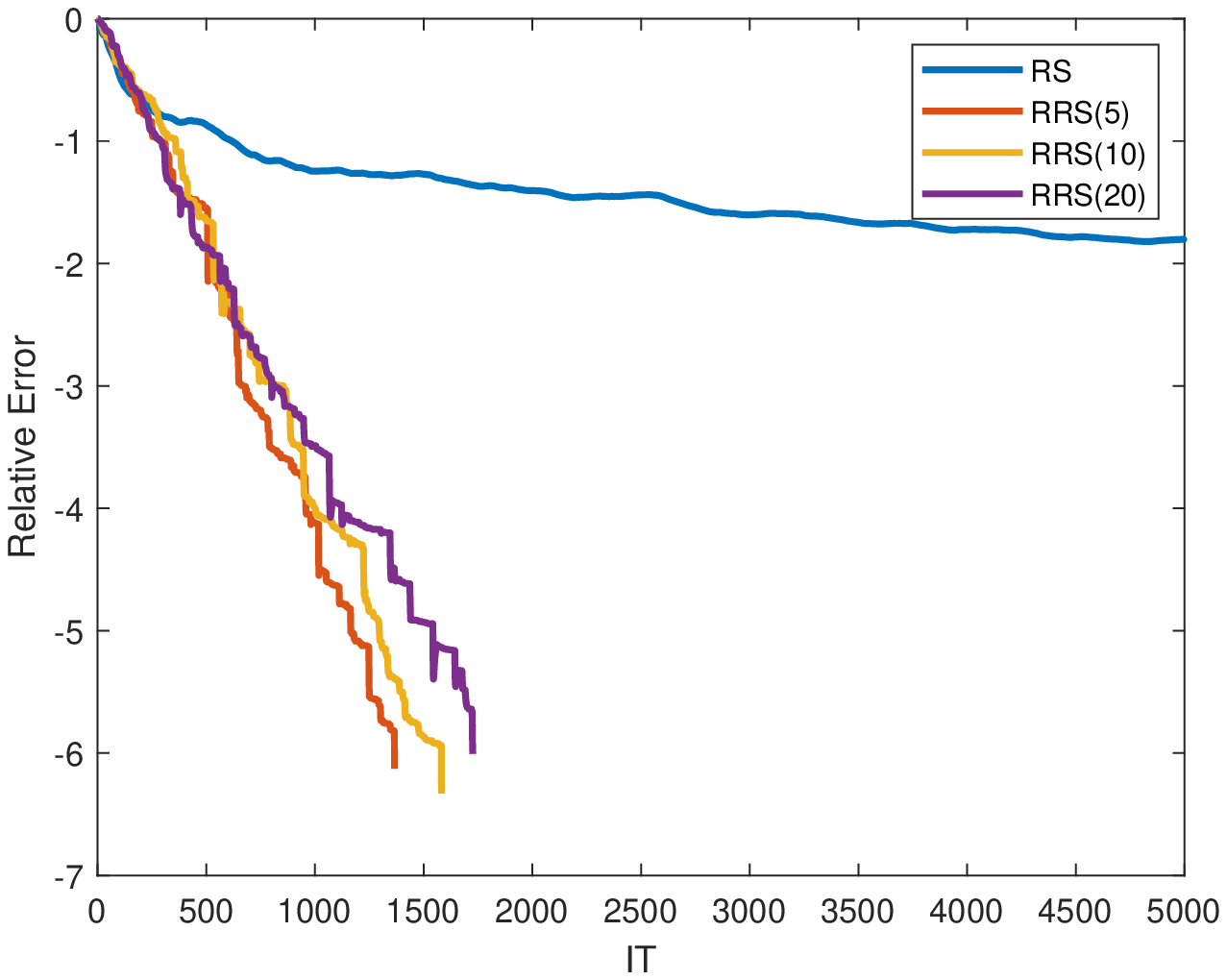}
}
\caption{Convergence curves for overdetermined and underdetermined cases}
\label{fig:ex1}
\end{figure}

From Figure \ref{fig:ex1}, it is observed that the restarted randomized surrounding method converges with
	expected linear rate and the curves decrease much steeper than that of randomized surrounding method  in both the underdetermined and overdetermined cases. It indicates that restarted randomized surrounding method is much efficient than the original randomized surrounding method for underdetermined and overdetermined cases.

In order to further compare the convergence performance, in Table \ref{tab:over} and Table \ref{tab:under}, the number of iteration and the elapsed CPU time of randomized surrounding method and restarted randomized surrounding method for $q=5,10$ and $20$ are listed for different sizes respectively.
All results are computed the average over 40 trials.

\begin{table}[!htbp]
	\begin{center}
		\begin{tabular}{|cc|c|c|c|c|c|} \hline
			\multicolumn{2}{|c|}{}  & $1000\times 100$ & $2000\times 100$ & $3000\times 100$ & $4000\times 100$ & $5000\times 100$ \\ \hline
			\multicolumn{1}{|c|}{\multirow{2}{*}{RS}}
			& IT   	  &  5000 	 & 5000 	 & 5000 	 &  5000 &	 5000  \\ \cline{2-7}
			\multicolumn{1}{|c|}{}& CPU  	 &  0.0088 	 & 0.0093 	 & 0.0100	 &  0.0097 &	 0.0107  \\ \hline
			\multicolumn{1}{|c|}{\multirow{2}{*}{RRS(5)}}& IT   	 &  1929 	 & 1830 	 & 1812 	 &  1804 &	 1776  \\ \cline{2-7}
			\multicolumn{1}{|c|}{}& CPU  	 &  0.0035 	 & 0.0036 	 & 0.0040	 &  0.0039 &	 0.0041  \\ \hline
			\multicolumn{1}{|c|}{\multirow{2}{*}{RRS(10)}} & IT   	 &  2062 	 & 1962 	 & 1952 	 &  1945 &	 1950  \\ \cline{2-7}
			\multicolumn{1}{|c|}{}& CPU  	 &  0.0036 	 & 0.0038 	 & 0.0050	 &  0.0041 &	 0.0044  \\ \hline
			\multicolumn{1}{|c|}{\multirow{2}{*}{RRS(20)}} & IT   	 &  2163 	 & 2092 	 & 2061 	 &  2064 &	 2043  \\ \cline{2-7}
			\multicolumn{1}{|c|}{}& CPU  	 &  0.0038 	 & 0.0041 	 & 0.0047	 &  0.0043 &	 0.0045  \\ \hline
		\end{tabular}
	\end{center}
	\caption{Numerical results for overdetermined cases}\label{tab:over}
\end{table}

\begin{table}[!htbp]
	\begin{center}
		\begin{tabular}{|cc|c|c|c|c|c|} \hline
			\multicolumn{2}{|c|}{}  & $100 \times 1000$ & $100\times 2000$ & $100 \times 3000$ & $100\times 4000$ & $100\times 5000$ \\ \hline
			\multicolumn{1}{|c|}{\multirow{2}{*}{RS}}
			& IT   	 &  5000 	 & 5000 	 & 5000 	 &  5000 &	 5000  \\ \cline{2-7}
			\multicolumn{1}{|c|}{}& CPU  	 &  0.0282 	 & 0.0584 	 & 0.1114	 &  0.1847 &	 0.3399  \\\hline
			\multicolumn{1}{|c|}{\multirow{2}{*}{RRS(5)}} & IT   	 &  1729 	 & 1608 	 & 1541 	 &  1531 &	 1472  \\ \cline{2-7}
			\multicolumn{1}{|c|}{}& CPU  	 &  0.0093 	 & 0.0182 	 & 0.0331	 &  0.0539 &	 0.0922  \\\hline
			\multicolumn{1}{|c|}{\multirow{2}{*}{RRS(10)}} & IT   	 &  1893 	 & 1740 	 & 1663 	 &  1672 &	 1666  \\ \cline{2-7}
			\multicolumn{1}{|c|}{}& CPU  	 &  0.0102 	 & 0.0194 	 & 0.0371	 &  0.0596 &	 0.1052  \\\hline
			\multicolumn{1}{|c|}{\multirow{2}{*}{RRS(20)}} & IT   	 &  1978 	 & 1893 	 & 1805 	 &  1775 &	 1741  \\ \cline{2-7}
			\multicolumn{1}{|c|}{}& CPU  	 &  0.0106 	 & 0.0211 	 & 0.0404	 &  0.0630 &	 0.1112  \\ \hline
		\end{tabular}
	\end{center}
	\caption{Numerical results for underdetermined cases}\label{tab:under}
\end{table}

From Table \ref{tab:over} and Table \ref{tab:under}, it is seen that
the restarted randomized surrounding methods are efficient,
and require less steps and CPU time than randomized surrounding method.
Among these methods, the restarted randomized surrounding methods with $q=5$  performs the best,
which indicates that a small number of restart may greatly improve the rate of convergence.

{\bf Example2.} The test matrices are chosen from the SuiteSparse Matrix Collection. The property of the matrices including size, density, rank and Euclidean condition number (i.e., cond ) of the tested matrices are given in Table \ref{info}.
	
\begin{table}[!htbp]
	\centering
	
	\begin{tabular}{|c|c|c|c|c|}
		\hline
		name                       & $crew1$          & $bibd\_13\_6$   & $cari$          & $bibd\_16\_8$          \\ \hline
		size                         & $135\times 6469$ & $78\times 1716$ & $400\times1200$ & $120\times 12870$  \\ \hline
		rank&135 &78&  400 &120\\\hline
		density                         & 5.38\%           & 19.23\%         & 31.83\%         & 23.33\%          \\ \hline
		cond                       & 18.20            & 6.27            & 3.13            & 9.54        \\ \hline
		
	\end{tabular}
	\caption{Information of the matrices from the  Matrix Market}\label{info}
\end{table}

	\begin{figure}[H]
		\centering
		\includegraphics[width=14cm]{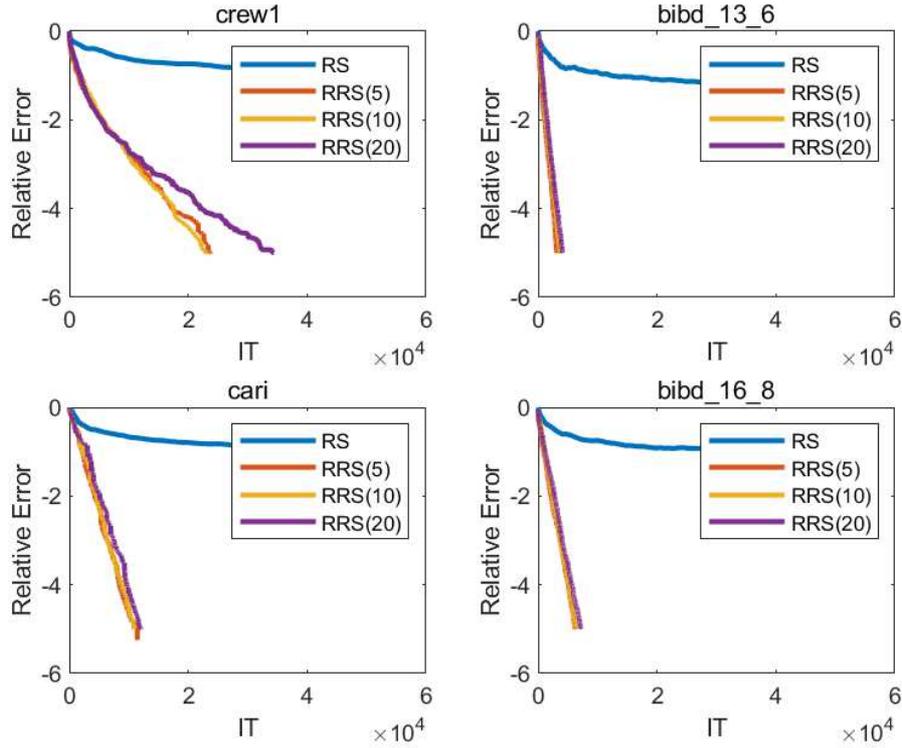}
		\caption{Convergence curves for the matrices from Matrix Market}\label{fig:mm}
	\end{figure}

The curves of the relative error versus the number of the iteration are plotted in Figure \ref{fig:mm} 	for randomized surrounding method and restarted randomized surrounding methods respectively. From Figure \ref{fig:mm}, it is seen that all the restarted
randomized surrounding methods converges faster than the randomized surrounding method, which further confirms the efficiency of the restart technique.

	In Table \ref{tab:mm}, the number of iteration and the elapsed CPU time of the randomized surrounding method and restarted randomized surrounding methods when $q=5,10,20$ are listed  respectively.

\begin{table}[!htbp]
	\begin{center}
		\begin{tabular}{|cc|c|c|c|c|} \hline
			\multicolumn{2}{|c|}{Method}  & $crew1$          & $bibd\_13\_6$   & $cari$          & $bibd\_16\_8$         \\ \hline
			\multicolumn{1}{|c|}{\multirow{2}{*}{RS}}
			& IT   	 &  5000 	 & 5000 	 & 5000 	 &  5000  \\ \cline{2-6}
			\multicolumn{1}{|c|}{}
			& CPU    &  22.3069   & 5.2526   & 4.2073  &  79.2887 \\\hline
			\multicolumn{1}{|c|}{\multirow{2}{*}{RRS(5)}}& IT     &  11921   &\bf{2027}   & \bf{6319}   &  \bf{3592} \\ \cline{2-6}
			\multicolumn{1}{|c|}{}   & CPU    &  5.2916   & \bf{0.2139}   & \bf{0.5388}  &  \bf{5.5906} \\\hline
			\multicolumn{1}{|c|}{\multirow{2}{*}{RRS(10)}} & IT     &  11545   & 2210   & 6901   &  3914\\ \cline{2-6}
			\multicolumn{1}{|c|}{}    & CPU    &  \bf{5.0950}   & 0.2319   & 0.5826  &  6.0782\\\hline
			\multicolumn{1}{|c|}{\multirow{2}{*}{RRS(20)}} & IT     &  \bf{11456}   & 2399   & 7256   &  4304 \\ \cline{2-6}
			\multicolumn{1}{|c|}{}    & CPU    &  5.1426   & 0.2502   & 0.6167  &  6.6959 \\\hline
		\end{tabular}
	\end{center}
	\caption{Numerical results for the matrices from Matrix Market}\label{tab:mm}
\end{table}

From Table \ref{tab:mm}, it is seen that the
restarted randomized surrounding methods are efficient,
and require less number of iteration and CPU time than the randomized surrounding method.
For the  matrix `crew1', RRS(10) require the least CPU time while RRS(20) take
the least number of iteration; for the other three examples,
RRS(5) require the least CPU time and the least number of iteration.
This implies that the restarted strategy is efficient and can greatly
improve the convergence while the optimal restart number is possibly  problem depended.

{\bf Example 3.} Finally, we compare these methods for solving a 2-D parallel-beam tomography problem and a seismic travel-time tomography problem generated by AIR tool box.
The right term is $b=Ax_\ast+e$ where $e$ is the noise vector and the relative noise level  is 0.01.
The signal-noise ratio (SNR) is defined as
$$\text{SNR}:=10\log_{10}\frac{\sum_{i=0}^{n}x_i^2}{\sum_{i=0}^{n}(x_i-\hat{x}_i)^2}$$
where $x$ is the original clean signal, $\hat{x}$ is the denoised signal, and $n$ is the length of the signal. The greater the value of SNR is, the better the denoising effect.
The reconstruction  images are compared  in Figures \ref{fig:pb} and \ref{fig:st} for  a 2-D parallel-beam tomography problem and a seismic travel-time tomography problem respectively, after $100m$ iterations where $m$ is the number of rows.

\begin{figure}[!htbp]
	\centering
	\includegraphics[width=14cm]{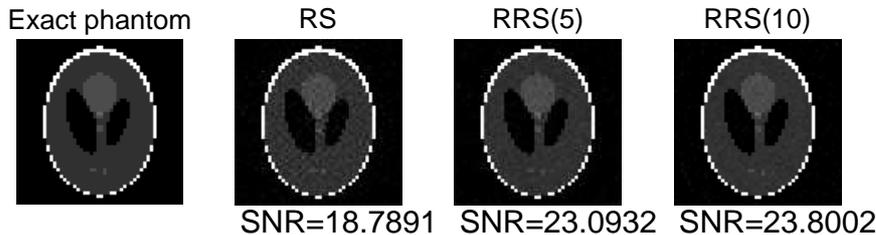}
	\caption{2-D parallel-beam tomography problem where $A\in\mathbb{R}^{23883\times 2500}$}\label{fig:pb}
\end{figure}

\begin{figure}[!htbp]
	\centering
	\includegraphics[width=14cm]{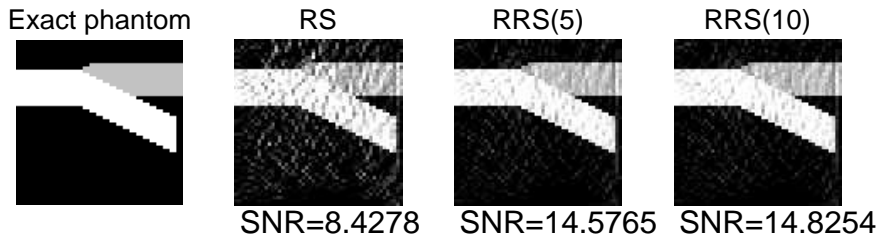}
	\caption{2-D seismic travel-time tomography problem where $A\in\mathbb{R}^{2500\times 2500}$}\label{fig:st}
\end{figure}

From Figures \ref{fig:pb} and \ref{fig:st}, it is observed that
the restarted randomized surrounding method can remove the noise and restore the real image efficiently.
Moreover,
the images reconstructed by the restarted randomized surrounding methods
are better than that of the randomized surrounding method
from the viewpoint of the sharpness of images and higher values of SNR.
Among the three approaches, it is seen that the recovered image of RRS(10) is the
best.

\section{Conclusions}\label{secconclu}

Restarted randomized surrounding methods are proposed for solving large linear problems. The convergence theory is established when the probability of row selection rule is proportional to the squared norm of row. Numerical experiments verify the proposed algorithms are efficient and outperform
the existing method for overdetermined and underdetermined linear equations, as well as in the application in image processing.
The continue work including of the dynamical restarted strategies, relaxation and the randomized selection rule are deserved to further study in the future.


\bibliographystyle{plain}

\begin{thebibliography}{10}

\bibitem{1984Ansorge}
Rainer Ansorge.
\newblock Connections between the cimmino-method and the kaczmarz-method for
  the solution of singular and regular systems of equations.
\newblock {\em Computing}, 33(3):367--375, 1984.

\bibitem{1995ADRS}
Mario Arioli, Iain~S. Duff, Daniel Ruiz, and Miloud Sadkane.
\newblock Block lanczos techniques for accelerating the block cimmino method.
\newblock {\em SIAM Journal on Scientific Computing}, 16(6):1478--1511, 1995.

\bibitem{2018BWgreedy}
Zhong-Zhi Bai and Wen-Ting Wu.
\newblock On greedy randomized kaczmarz method for solving large sparse linear
  systems.
\newblock {\em SIAM Journal on Scientific Computing}, 40(1):A592--A606, 2018.

\bibitem{2019BW}
Zhong-Zhi Bai and Wen-Ting Wu.
\newblock On partially randomized extended kaczmarz method for solving large
  sparse overdetermined inconsistent linear systems.
\newblock {\em Linear Algebra and Its Applications}, 578:225--250, 2019.

\bibitem{2004Benzi}
Michele. Benzi.
\newblock Gianfranco cimmino’s contributions to numerical mathematics.
\newblock 2004.

\bibitem{1981C}
Yair Censor.
\newblock Row-action methods for huge and sparse systems and their
  applications.
\newblock {\em SIAM review}, 23(4):444--466, 1981.

\bibitem{1938cimmino}
Gianfranco Cimmino.
\newblock Cacolo approssimato per le soluzioni dei systemi di equazioni
  lineari.
\newblock {\em La Ricerca Scientifica (Roma)}, 1:326--333, 1938.

\bibitem{2021Du}
Yi-Shu Du, Ken Hayami, Ning Zheng, Keiichi Morikuni, and Jun-Feng Yin.
\newblock Kaczmarz-type inner-iteration preconditioned flexible gmres methods
  for consistent linear systems.
\newblock {\em SIAM Journal on Scientific Computing}, 43(5):S345--S366, 2021.

\bibitem{1980E}
Tommy Elfving.
\newblock Block-iterative methods for consistent and inconsistent linear
  equations.
\newblock {\em Numerische Mathematik}, 35(1):1--12, 1980.

\bibitem{2022JZY}
Xiang-Long Jiang, Ke~Zhang, and Jun-Feng Yin.
\newblock Randomized block kaczmarz methods with k-means clustering for solving
  large linear systems.
\newblock {\em Journal of Computational and Applied Mathematics}, 403:113828,
  2022.

\bibitem{1937K}
Stefan Karczmarz.
\newblock Angenaherte auflosung von systemen linearer glei-chungen.
\newblock {\em Bull. Int. Acad. Pol. Sic. Let., Cl. Sci. Math. Nat.}, pages
  355--357, 1937.

\bibitem{2008PPS}
Stefania Petra, Constantin Popa, and Christoph Schn{\"o}rr.
\newblock {\em Extended and Constrained Cimmino-type Algorithms with
  Applications in Tomographic Image Reconstruction}.
\newblock Universit{\"a}tsbibliothek Heidelberg, 2008.

\bibitem{2021Steinerberger}
Stefan Steinerberger.
\newblock Surrounding the solution of a linear system of equations from all
  sides.
\newblock {\em Quart. Appl. Math.}, 79:419--429, 2021.

\bibitem{2009SV}
Thomas Strohmer and Roman Vershynin.
\newblock A randomized {K}aczmarz algorithm with exponential convergence.
\newblock {\em Journal of Fourier Analysis and Applications}, 15(2):262--278,
  2009.

\bibitem{2018TMA}
F.~Sukru Torun, Murat Manguoglu, and Cevdet Aykanat.
\newblock A novel partitioning method for accelerating the block cimmino
  algorithm.
\newblock {\em SIAM Journal on Scientific Computing}, 40(6):C827--C850, 2018.

\end{thebibliography}

\end{document}